\numberwithin{equation}{section}
\theoremstyle{plain}
\newtheorem{theorem}{Theorem}[section]
\newtheorem{lemma}[theorem]{Lemma}
\newtheorem{corollary}[theorem]{Corollary}
\newtheorem{proposition}[theorem]{Proposition}
\theoremstyle{definition}
\newtheorem{definition}[theorem]{Definition}
\newtheorem{remark}[theorem]{Remark}
\newtheorem{example}[theorem]{Example}
\newcommand{\ap}{\alpha_p}
\newcommand{\apto}{\xrightarrow{\ap}}
 \email{nuno.januario.alves@univie.ac.at}
 \email{paulos.pjoo@gmail.com}
\begin{document}
  \numberwithin{equation}{section}
\title[]{A mode of convergence arising in diffusive relaxation}

\author[Nuno J. Alves and Jo\~{a}o Paulos]{}
\address[Nuno J. Alves]{
      Faculty of Mathematics, University of Vienna, Oskar-Morgenstern-Platz 1, 1090 Vienna, Austria.}

\keywords{modes of convergence, measurable functions, diffusive relaxation}
\subjclass[2020]{28A20, 35Q35} 


\maketitle

\centerline{\scshape Nuno J. Alves \& Jo\~{a}o Paulos}

\begin{abstract}
In this work, a mode of convergence for measurable functions is introduced. A related notion of Cauchy sequence is given and it is proved that this notion of convergence is complete in the sense that Cauchy sequences converge. Moreover, the preservation of convergence under composition is investigated. The origin of this mode of convergence lies in the path of proving that the density of a Euler system converges almost everywhere (up to subsequences) towards the density of a non-linear diffusion system, as a consequence of the convergence in the relaxation limit. 
 \end{abstract}


\section{Introduction}
In the context of diffusive relaxation, it has been proved that a family $\big((\rho_\varepsilon, u_\varepsilon)\big)_{\varepsilon > 0},$ where $(\rho_\varepsilon, u_\varepsilon)$ is a dissipative weak solution of the Euler system with friction 
\begin{equation*} 
\begin{cases}
 \partial_t\rho_\varepsilon + \nabla \cdot (\rho_\varepsilon u_\varepsilon) = 0, \\
 \partial_t(\rho_\varepsilon u_\varepsilon)+ \nabla \cdot (\rho_\varepsilon u_\varepsilon \otimes u_\varepsilon) = -\dfrac{1}{\varepsilon} \rho_\varepsilon \nabla \dfrac{\delta \mathcal{E}}{\delta \rho}(\rho_\varepsilon) -\dfrac{1}{\varepsilon} \rho_\varepsilon u_\varepsilon  , \\
 \end{cases}
\end{equation*}
converges in the sense of relative energy to a strong and bounded away from vacuum solution $\bar \rho$ of the non-linear diffusion system
\begin{equation*} 
\partial_t \bar \rho=\nabla \cdot \Big(\bar \rho \nabla\dfrac{\delta \mathcal{E}}{\delta \rho}(\bar \rho)\Big) .\\
\end{equation*}
The systems of equations above are defined in a space-time domain $(0,T) \times \Omega,$ where $T>0$ is a fixed time horizon and $\Omega$ is either the $d$-dimensional torus $\mathbb{T}^d$ or a smooth bounded domain of $\mathbb{R}^d,$ $d \in \mathbb{N}.$ Moreover, the functional $\mathcal{E}$ is defined as
\begin{equation*}
\mathcal{E}(\rho) := \int_\Omega h(\rho) \, dx  ,
\end{equation*}
where $h$ is the internal energy function. \par 
The convergence is established by means of the relative energy $\Psi_\varepsilon : \mathopen{[}0,T) \to \mathbb{R},$ given by 
 \[\Psi_\varepsilon(t) = \int_\Omega \varepsilon \tfrac{1}{2} \rho_\varepsilon |u_\varepsilon - \bar u|^2 + h(\rho_\varepsilon | \bar \rho) \, dx , \quad \bar u = - \nabla\dfrac{\delta \mathcal{E}}{\delta \rho}(\bar \rho) , \] where \[h(\rho | \bar \rho) = h(\rho) - h(\bar \rho) - h^\prime(\bar \rho)(\rho-\bar \rho)  . \]
Specifically, assuming that $\Psi_\varepsilon(0) \to 0$ as $\varepsilon \to 0,$ then
\begin{equation*} 
\sup\limits_{[0,T[} \Psi_{\varepsilon} \to 0 \ \text{as} \ \varepsilon \to 0 .
\end{equation*}
In particular, for each $t \in [0,T),$ it holds that 
\begin{equation} \label{hconv}
\int_\Omega h(\rho_\varepsilon | \bar \rho) \, dx \to 0 \ \text{as} \ \varepsilon \to 0 . 
\end{equation} 
For more details refer to \cite{alves, carrillo, choi, gasdynamics, thanos}.  \par 
At this point, it is relevant to examine if (\ref{hconv}) implies that the sequence $(\rho_\varepsilon)$ converges almost everywhere to $\bar \rho,$ even if one has to pass to a subsequence. In the quest to reach that conclusion, one stumbles upon a new notion of convergence. The intent of this work is to explore that mode of convergence at the generality of measurable functions on a general measure space. The appearance of that notion of convergence is described next. \par
Within the framework of the analysis described above, the function $h$ is assumed to belong to the class $\mathcal{H}_k^\gamma$ given by  
\[\mathcal{H}_k^\gamma = \Big\{h \in C^2((0,\infty)) \cap C([0,\infty)) \ \Big| \ \lim\limits_{\rho \to \infty} \frac{h(\rho)}{\rho^\gamma}=\frac{k}{\gamma-1} , \ h^{\prime \prime}(\rho) > 0 \ \forall \rho>0 \Big\} ,\] 
where $k > 0$ and $\gamma > 1.$ A typical example of a function belonging to $\mathcal{H}_k^\gamma$ is $h(\rho) = \frac{k}{\gamma-1}\rho^\gamma.$  Regarding that class of functions, recall the following lemma \cite{thanos}:
\begin{lemma} \label{hlemma} 
 Let $h \in \mathcal{H}_k^\gamma$ with $k >0$ and $\gamma > 1,$ and let $\bar \rho \in [\delta,M]$ for some $\delta > 0$ and $M < \infty.$ Then, there exists $R \geq M{+}1$ and positive constants $C_1, C_2$ depending on $k,\gamma, \delta, M$ such that
\begin{equation*}
 h(\rho| \bar \rho) \geq
 \begin{cases}
  C_1 |\rho-\bar \rho|^2  , \quad & \text{if} \ \rho \in  [0,R]  , \\

  C_2 |\rho-\bar \rho|^{\gamma} , \quad & \text{if} \ \rho \in \ ]R,\infty [ .

 \end{cases}
\end{equation*}
\end{lemma}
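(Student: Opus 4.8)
The plan is to treat the two regions separately, in both cases reducing the desired bound to the positivity of a suitable continuous quotient and then exploiting either compactness or the prescribed growth at infinity. Throughout I use the integral form of Taylor's theorem, which on any segment contained in $(0,\infty)$ gives
\[
h(\rho\,|\,\bar\rho) = \int_{\bar\rho}^{\rho}(\rho-t)\,h''(t)\,dt = (\rho-\bar\rho)^2\int_0^1 (1-s)\,h''\bigl(\bar\rho+s(\rho-\bar\rho)\bigr)\,ds ,
\]
so that $h(\rho\,|\,\bar\rho)>0$ for every $\rho\in(0,\infty)\setminus\{\bar\rho\}$ by strict convexity; moreover $h(0\,|\,\bar\rho)>0$ as well, since $\rho\mapsto h(\rho\,|\,\bar\rho)$ is strictly decreasing on $(0,\bar\rho)$ and continuous up to $\rho=0$.

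For the region $\rho\in[0,R]$ (with $R$ fixed below), I would introduce the quotient
\[
G(\rho,\bar\rho) := \frac{h(\rho\,|\,\bar\rho)}{(\rho-\bar\rho)^2}, \qquad \rho\neq\bar\rho,
\]
and extend it to the diagonal by $G(\bar\rho,\bar\rho):=\tfrac12 h''(\bar\rho)$. Using $h\in C([0,\infty))$ for continuity of the numerator (including at $\rho=0$) and the integral representation near the diagonal (where $\bar\rho\geq\delta>0$ keeps us away from the singular point $0$), I would argue that $G$ extends to a continuous function on the compact rectangle $[0,R]\times[\delta,M]$. Since $G$ is strictly positive there by the previous paragraph, it attains a positive minimum; calling it $C_1$ yields $h(\rho\,|\,\bar\rho)\geq C_1|\rho-\bar\rho|^2$ uniformly for $\bar\rho\in[\delta,M]$.

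For the region $\rho>R$, I would exploit the asymptotic condition defining $\mathcal{H}_k^\gamma$. Writing
\[
\frac{h(\rho\,|\,\bar\rho)}{(\rho-\bar\rho)^\gamma} = \frac{h(\rho)}{\rho^\gamma}\Bigl(\frac{\rho}{\rho-\bar\rho}\Bigr)^\gamma - \frac{h(\bar\rho)}{(\rho-\bar\rho)^\gamma} - \frac{h'(\bar\rho)}{(\rho-\bar\rho)^{\gamma-1}},
\]
and noting that $h$ and $h'$ are bounded on the compact set $[\delta,M]$, I would send $\rho\to\infty$ to see that the right-hand side tends to $\tfrac{k}{\gamma-1}$, uniformly in $\bar\rho\in[\delta,M]$ (the last two terms vanish because $\gamma>1$, and $\rho/(\rho-\bar\rho)\to 1$ uniformly). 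Hence there is $R_0$ such that the quotient exceeds $\tfrac{k}{2(\gamma-1)}=:C_2$ for all $\rho>R_0$ and all $\bar\rho\in[\delta,M]$; choosing $R:=\max\{R_0,M+1\}$ then gives both $R\geq M+1$ and $h(\rho\,|\,\bar\rho)\geq C_2|\rho-\bar\rho|^\gamma$ on $(R,\infty)$.

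The main obstacle is the bounded region rather than the growth estimate: because $h''$ is guaranteed only on $(0,\infty)$, the quotient $G$ must be controlled at the endpoint $\rho=0$ and across the diagonal $\rho=\bar\rho$ without appealing to $h''(0)$. The first point is handled by continuity of $h$ up to $0$ together with strict positivity of the relative energy, and the second by the integral remainder, which stays in the regular region $(0,\infty)$ precisely because $\bar\rho\geq\delta>0$. Keeping every estimate uniform in $\bar\rho\in[\delta,M]$—so that $C_1,C_2$ depend only on $k,\gamma,\delta,M$—is exactly what dictates working on the full rectangle $[0,R]\times[\delta,M]$ and invoking compactness, rather than fixing a single value $\bar\rho$.
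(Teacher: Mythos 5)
Your argument is correct. Note, however, that the paper does not prove this lemma at all: it is recalled verbatim from the cited reference of Lattanzio and Tzavaras, so there is no in-paper proof to compare against. Your two-region strategy --- extending $h(\rho\,|\,\bar\rho)/(\rho-\bar\rho)^2$ continuously across the diagonal via the integral remainder (valid since $\bar\rho\geq\delta>0$), treating $\rho=0$ by monotonicity rather than by $h''$, taking a positive minimum on the compact rectangle, and using the growth condition $h(\rho)/\rho^\gamma\to k/(\gamma-1)$ together with $\gamma>1$ to kill the lower-order terms uniformly for $\bar\rho\in[\delta,M]$ on $(R,\infty)$ --- is essentially the standard proof given in that reference, and the order of quantifiers (fix $R$ from the asymptotics first, then define $C_1$ on $[0,R]\times[\delta,M]$) is handled correctly.
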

The next step is to combine (\ref{hconv}) with the previous lemma. Let $\varepsilon = 1/n, \ n \in \mathbb{N}, \ \rho_n = \rho_\varepsilon$ and define $B_n=B_n(t)$ by 
 \[B_n = \{x \in \Omega \ | \ 0 \leq \rho_n(t,x) \leq R \} , \] where $R$ is as in the lemma. Thus,
\begin{equation*}
\begin{split}
\int_{\Omega} h(\rho_n | \bar \rho)  \, dx & = \int_{B_n} h(\rho_n | \bar \rho)  \, dx + \int_{B_n^c} h(\rho_n | \bar \rho)  \, dx \\
& \geq C_1 \int_{B_n} |\rho_n - \bar \rho|^{2} \, dx + C_2 \int_{B_n^c} |\rho_n - \bar \rho|^{\gamma } \, dx \\
& > C_1 \int_{B_n} |\rho_n - \bar \rho|^{2} \, dx + C_2|R-M|^{\gamma } \mathcal{L}(B_n^c)\\
& \geq C_1 \int_{B_n} |\rho_n - \bar \rho|^{2} \, dx + C_2\mathcal{L}(B_n^c) ,
\end{split}
\end{equation*}
where $B_n^c$ denotes the complement of $B_n$ in $\Omega$ and $\mathcal{L}$ is the Lebesgue measure. Hence, from $(\ref{hconv})$ one can conclude that
\begin{equation} \label{motivationalpha}
\int_{B_n} |\rho_n - \bar \rho|^{2} \, dx \to 0  , \quad  \mathcal{L}(B_n^c) \to 0  , \quad \text{as} \ n \to \infty .
\end{equation}
This motivates a notion of convergence in which a sequence of measurable functions $(\rho_n)$ converges to a measurable function $\bar \rho$ if there exists a sequence of measurable sets $(B_n)$ whose measure of the complements tends to zero as $n \to \infty$ and such that the integral of $|\rho_n-\bar \rho|^p$ over $B_n$ goes to zero as $n \to \infty,$ where $p \geq 1$; see Definition \ref{alphaconv}. As one will see, this notion of convergence implies convergence in measure, which is a sufficient condition for the existence of a subsequence that converges almost everywhere \cite{bartleelements}. \par
Section \ref{modessection} presents the notions of convergence that are treated in this work, together with the relations between them. Two notions of convergence are introduced. The first, the one derived in the analysis above, represents the focus of this work. The second, which is considered here for its similarity to the first, turned out to be very useful to prove a completeness result.  \par
Section \ref{cauchysection} introduces notions of Cauchy sequences associated with the convergences studied in this work. Moreover, completeness theorems related to those Cauchy notions are obtained. \par
Section \ref{convergencesection} is devoted to understand how different modes of convergence become equivalent under extra assumptions. As a consequence of this study, it is proved that the space of measurable functions, together with the notion of convergence motivated by (\ref{motivationalpha}), form a sequential convergence class \cite{dudley, kelley}. \par
Section \ref{preservationsection} is inspired by \cite{bartlepreserv} and explores in which conditions a function preserves (under composition) the modes of convergence treated in this work. For these modes of convergence, one finds that such preservation holds for Lipschitz continuous functions. Furthermore, it turns out that this characterizes Lipschitz continuity. Moreover, it is proved that convergence in $L_p$ is also preserved by Lipschitz continuous functions, at the generality of measurable functions, which is different from a result obtained in \cite{bartlepreserv}, where the same investigation is done considering sequences in $L_p.$ 

\section*{Notation and conventions}
Unless stated otherwise, all considered functions belong to a general measure space $(X, \mathbb{X} , \mu).$ The elements of $\mathbb{X}$ are called measurable sets, and by a measurable function one understands a $\mathbb{X}$-measurable real-valued function defined on $X$. The space of measurable functions is denoted by $M(X).$ Given a sequence $(f_n),$ one writes $(g_n) \subseteq (f_n)$ to indicate that $(g_n)$ is a subsequence of $(f_n).$ In the examples, one considers $(X, \mathbb{X} , \mu)=(A, \mathcal{B}, \mathcal{L}),$ where $A$ is a subset of $\mathbb{R},$ $\mathcal{B}$ is the Borel $\sigma$-algebra and $\mathcal{L}$ is the Lebesgue measure. The composition of a function $\varphi: \mathbb{R} \to \mathbb{R}$ and a function $f:X \to \mathbb{R}$ is denoted by $\varphi(f).$ The set of positive integers is denoted by $\mathbb{N}$ while the set of non-negative integers is denoted by $\mathbb{N}_0.$ Moreover, the integrability exponent $p$ is a fixed real number belonging to $[1, \infty) .$

\section{Modes of convergence} \label{modessection}
Within the realm of measurable functions, many are the ways of saying that a sequence converges to a certain limit. Among the most important convergence notions, one encounters the classical notions of convergence in $L_p,$ in measure, almost uniformly (a.u.) or almost everywhere (a.e.). In this manuscript, two other notions of convergence for measurable functions are introduced. The first, named $\alpha_p$-convergence, is the main focus of this work, and has its definition motivated by the analysis detailed above. The second, convergence almost in $L_p,$ arises as a combination of the $L_p$ and almost uniformly convergences, and it is considered here due to its similarity to the $\alpha_p$-convergence. \par 
It is always the case that the following relations hold
\[L_p\text{-convergence} \Rightarrow \text{convergence almost in} \ L_p  \Rightarrow \alpha_p\text{-convergence} \Rightarrow  \text{convergence in measure}  .\] 
Interestingly enough, none of this modes of convergence are equivalent. Moreover, in a finite measure space one has that
\[\text{convergence a.e.} \Rightarrow \text{convergence a.u.}  \Rightarrow \text{convergence almost in} \ L_p \Rightarrow \alpha_p\text{-convergence}.\] \par 

\subsection{Preliminaries}
First, one recalls some basic definitions and results from the classical theory. It is customary to  define convergence in $L_p$ for sequences of functions in $L_p,$ however here one considers measurable functions in general. \par 
A sequence of measurable functions $(f_n)$ is said to converge to a measurable function $f$:
\begin{enumerate}[(i)]
\item in $L_p$ if \[\int_X |f_n - f|^p \, d\mu \to 0  \ \text{as} \ n \to \infty  ,\] 
 \item in measure if 
 \[ \forall \delta > 0 \ \mu\big(  \{x \in X \ | \ |f_n(x) - f(x)| \geq \delta \}  \big) \to 0  \ \text{as} \ n \to \infty  ,\] 
 \item almost uniformly if 
 \[\forall \delta >0 \ \exists E_\delta \in \mathbb{X} \ \text{with} \ \mu(E_\delta) < \delta \ \text{such that} \ f_n \to f \ \text{uniformly} \ \text{on} \ E_\delta^c  ,\]
 
 \item almost everywhere if 
 \[\exists N \in \mathbb{X} \ \text{with} \ \mu(N) = 0 \ \text{such that} \ f_n \to f \ \text{pointwise on} \ N^c  . \]
\end{enumerate}
A common feature of these modes of convergence is the uniqueness almost everywhere of the limit, that is, if a sequence $(f_n)$ converges to $f$ and $g$  in one the previous modes, then $f = g$ almost everywhere. \par In general, some notions of convergence imply others. For instance, if $(f_n)$ converges to $f$ in $L_p$ then it converges to $f$ in measure since 
\[\delta^p \mu\big(  \{x \in X \ | \ |f_n(x) - f(x)| \geq \delta \}  \big)  \leq \int_X |f_n - f|^p \, d\mu  .\]
Furthermore, if $(f_n)$ converges almost uniformly to $f$ then $(f_n)$ converges to $f$ in measure. On the other hand, if $(f_n)$ converges to $f$ in measure then there exists a subsequence $(f_{n_k})$ that converges to $f$ almost uniformly (and hence almost everywhere) \cite{bartleelements}.

\subsection{\texorpdfstring{$\bm{\ap}$}{\unichar{"3B1}\unichar{"209A}}-Convergence}
\begin{definition} \label{alphaconv}
A sequence $(f_n)$ of measurable functions is said to $\ap$-converge to a measurable function $f$ if there exists a sequence of measurable sets $(B_n)$ with $\mu (B_n^c) \to 0$ as $n \to \infty$ such that 
\[\int_{B_n} |f_n - f|^p \, d\mu \to 0 \ \text{as} \ n \to \infty .\] In this case one writes $f_n \apto f .$
\end{definition}
It is clear that convergence in $L_p$ implies $\ap$-convergence (simply take $B_n = X$ for every $n \in \mathbb{N}$), however, the reciprocal is not true in general. 
\begin{example} \label{examplealphanotLp}
Let $(X, \mathbb{X} , \mu) = ([0,1], \mathcal{B}, \mathcal{L})$ and $f_n = n^{1/p} \chi_{[0,1/n]}.$ Then $(f_n)$ $\ap$-converges to $0$ but it does not converge to $0$ in $L_p$. Indeed, setting $B_n = ]1/n,1]$ one has that \[\int_{B_n} |f_n|^p \, d\mu = \int_{[0,1/n]\cap]1/n,1]} n \ d\mu = 0 , \]
but \[\int_X |f_n|^p \, d\mu = 1  . \]
\end{example}
As a first result, one shows that the $\ap$-convergence is linear. 
\begin{proposition}
If $(f_n),(g_n)$ $\ap$-converge to $f,g,$ respectively, and $a,b \in \mathbb{R}$ then 
$(af_n + bg_n)$ $\ap$-converges to $af+bg. $
\end{proposition}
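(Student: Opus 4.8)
The plan is to manufacture, out of the two witnessing sequences of sets supplied by the hypotheses, a single sequence of sets that witnesses the $\ap$-convergence of the linear combination. By Definition \ref{alphaconv} there exist measurable sets $(B_n)$ and $(C_n)$ with $\mu(B_n^c) \to 0$ and $\mu(C_n^c) \to 0$ such that $\int_{B_n} |f_n - f|^p \, d\mu \to 0$ and $\int_{C_n} |g_n - g|^p \, d\mu \to 0$. The natural candidate is the intersection $D_n := B_n \cap C_n$. Since $D_n^c = B_n^c \cup C_n^c$, subadditivity of $\mu$ gives $\mu(D_n^c) \leq \mu(B_n^c) + \mu(C_n^c) \to 0$, so $(D_n)$ is an admissible sequence of sets for the definition.

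Next I would estimate the integrand pointwise on $D_n$. Writing the difference as $a(f_n - f) + b(g_n - g)$, the triangle inequality followed by the convexity bound $(s+t)^p \leq 2^{p-1}(s^p + t^p)$, valid for $s,t \geq 0$ and $p \geq 1$, yields
\[
 |(af_n + bg_n) - (af+bg)|^p \leq 2^{p-1}\big( |a|^p |f_n - f|^p + |b|^p |g_n - g|^p \big) .
\]
Integrating over $D_n$ and using the inclusions $D_n \subseteq B_n$ and $D_n \subseteq C_n$ together with the nonnegativity of the integrands gives
\[
 \int_{D_n} |(af_n + bg_n) - (af+bg)|^p \, d\mu \leq 2^{p-1}|a|^p \!\int_{B_n}\! |f_n - f|^p \, d\mu + 2^{p-1}|b|^p \!\int_{C_n}\! |g_n - g|^p \, d\mu .
\]
Both terms on the right tend to zero by hypothesis, so the left-hand side does as well. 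Together with $\mu(D_n^c) \to 0$, this is precisely the assertion that $(af_n + bg_n) \apto af + bg$.

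There is no serious obstacle here; the argument is essentially bookkeeping. The one point that deserves a moment of care is that the two given convergences are witnessed by \emph{possibly different} sets $B_n$ and $C_n$, so one cannot integrate the combined difference over a common domain for free. Passing to the intersection $D_n$ reconciles the two hypotheses, and the observation that makes this harmless is that shrinking the domain of integration of a nonnegative integrand only decreases the integral, so the tail estimates over $B_n$ and $C_n$ continue to control the respective integrals over $D_n$. The constant $2^{p-1}$ serves no purpose beyond being finite; any fixed constant depending only on $p$ would suffice.
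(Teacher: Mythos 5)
Your proposal is correct and follows essentially the same route as the paper: intersect the two witnessing sequences of sets, bound the complement of the intersection by subadditivity, and apply the convexity inequality $|s+t|^p \leq 2^{p-1}(|s|^p+|t|^p)$ together with monotonicity of the integral over the smaller set. The only difference is notational.
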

\begin{proof}
Let $(B_n),(D_n)$ be the sequences of measurable sets associated with the $\ap$-convergence of $(f_n),(g_n),$ respectively. Let $C_n = B_n \cap D_n$ so that $\mu(C_n^c) \leq \mu(B_n^c) + \mu(D_n^c) \to 0$ as $n \to \infty.$ The result follows from the inequality 
\[\int_{C_n} |af_n+bg_n-af-bg|^p \, d\mu \leq 2^{p-1}|a|^p \int_{B_n} |f_n-f|^p \, d\mu + 2^{p-1}|b|^p \int_{D_n} |g_n-g|^p \, d\mu  . \]
\end{proof}
Next, it is shown that the $\ap$-convergence implies convergence in measure. 
\begin{proposition} \label{alphaimplymeasure}
Let $(f_n)$ be a sequence of measurable functions that $\ap$-converges to a measurable function $f$. Then $(f_n)$ converges to $f$ in measure. 
\end{proposition}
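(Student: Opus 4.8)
The plan is to fix an arbitrary threshold $\delta > 0$ and show that the measure of the ``bad set''
\[
E_n^\delta := \{x \in X \ | \ |f_n(x) - f(x)| \geq \delta\}
\]
tends to zero as $n \to \infty$. The natural device is to split $E_n^\delta$ across the sets $B_n$ supplied by Definition~\ref{alphaconv} and their complements, writing $E_n^\delta = (E_n^\delta \cap B_n) \cup (E_n^\delta \cap B_n^c)$. This isolates the two pieces of information the hypothesis gives us: an integral bound on $B_n$ and a measure bound on $B_n^c$.

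On the complementary part, I would simply estimate $\mu(E_n^\delta \cap B_n^c) \leq \mu(B_n^c)$, which tends to $0$ by assumption. For the part inside $B_n$, the idea is a Chebyshev/Markov-type argument carried out on $B_n$: since $|f_n - f| \geq \delta$ on $E_n^\delta \cap B_n$, one has
\[
\delta^p \, \mu(E_n^\delta \cap B_n) \leq \int_{E_n^\delta \cap B_n} |f_n - f|^p \, d\mu \leq \int_{B_n} |f_n - f|^p \, d\mu ,
\]
so that $\mu(E_n^\delta \cap B_n) \leq \delta^{-p} \int_{B_n} |f_n - f|^p \, d\mu$, and the right-hand side tends to $0$ by the defining property of $\alpha_p$-convergence.

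Combining the two estimates via subadditivity yields
\[
\mu(E_n^\delta) \leq \mu(E_n^\delta \cap B_n) + \mu(B_n^c) \leq \frac{1}{\delta^p} \int_{B_n} |f_n - f|^p \, d\mu + \mu(B_n^c) \longrightarrow 0 ,
\]
and since $\delta > 0$ was arbitrary, this is exactly convergence in measure. I do not anticipate a genuine obstacle here; the only point requiring a little care is that the Chebyshev estimate must be performed on $B_n$ rather than on all of $X$, since the hypothesis controls only the truncated integral $\int_{B_n} |f_n - f|^p \, d\mu$ and not the full $L_p$-norm. The decomposition above is precisely what lets the error on the uncontrolled region $B_n^c$ be absorbed into the vanishing quantity $\mu(B_n^c)$.
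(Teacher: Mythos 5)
Your proof is correct and uses exactly the same ingredients as the paper's: the decomposition of $E_n^\delta$ across $B_n$ and $B_n^c$, the bound $\mu(E_n^\delta \cap B_n^c) \leq \mu(B_n^c)$, and a Chebyshev estimate restricted to $B_n$. The only difference is presentational --- the paper packages these estimates as a proof by contradiction, whereas you run them directly, which is if anything the cleaner formulation.
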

\begin{proof}
Suppose, towards a contradiction, that 
\[\exists\delta > 0  \quad \exists \varepsilon > 0 \quad \forall n\in \mathbb{N} \quad \exists k_n \geq n \quad \mu(E_{k_n}^\delta) \geq \varepsilon  ,\]
where \[E_{k_n}^\delta = \{x \in X \ | \ |f_{k_n}(x) -f(x)| \geq \delta \}  .\] By hypothesis, there exist a sequence of measurable sets $(B_n)$ and $N_1,N_2 \in \mathbb{N}$ such that 
\[\forall k \geq N_1 \quad \int_{B_k} |f_k-f|^p \, d\mu < \frac{\delta^p \varepsilon}{2} , \qquad \forall k \geq N_2 \quad \mu(B_k^c) < \frac{\varepsilon}{2} .\]
Let $N = \max\{N_1,N_2\}.$ There exists $k_N \geq N$ such that $\mu(E_{k_N}^\delta) \geq \varepsilon.$ Then
\[\varepsilon \leq  \mu(E_{k_N}^\delta) = \mu\big(E_{k_N}^\delta \cap B_{k_N})+ \mu(E_{k_N}^\delta \cap B_{k_N}^c) < \mu(E_{k_N}^\delta \cap B_{k_N}) + \frac{\varepsilon}{2}  ,\]
whence \[\mu(E_{k_N}^\delta \cap B_{k_N}) > \frac{\varepsilon}{2}  .\]
Consequently,
\[\int_{B_{k_N}} |f_{k_N}-f|^p \, d\mu \geq \ \int_{E_{k_N}^\delta \cap B_{k_N}} |f_{k_N}-f|^p \, d\mu \geq \delta^p \mu(E_{k_N}^\delta \cap B_{k_N}) >  \frac{\delta^p \varepsilon}{2}  ,\]
a contradiction.
\end{proof}
As a simple corollary of the previous result one has the following:
\begin{corollary}
If $f_n \apto f$ and $f_n \apto g,$ then $f = g$ almost everywhere.
\end{corollary}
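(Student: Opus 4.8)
The plan is to reduce the claim to the uniqueness almost everywhere of a limit in measure, which is already available from the earlier discussion. First I would invoke Proposition \ref{alphaimplymeasure}: from $f_n \apto f$ and $f_n \apto g$ it follows that $(f_n)$ converges to both $f$ and $g$ in measure. The task then becomes showing that two measure limits of one and the same sequence must agree almost everywhere.

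For that, I would fix $\delta > 0$ and use the triangle inequality in the form of the inclusion
\[\{x \in X \mid |f(x) - g(x)| \geq \delta\} \subseteq \{x \in X \mid |f_n(x) - f(x)| \geq \tfrac{\delta}{2}\} \cup \{x \in X \mid |f_n(x) - g(x)| \geq \tfrac{\delta}{2}\},\]
which holds for every $n \in \mathbb{N}$, since if both deviations were strictly below $\delta/2$ then $|f - g| < \delta$. Applying monotonicity and subadditivity of $\mu$ and letting $n \to \infty$, the right-hand side tends to zero by convergence in measure, whereas the left-hand side does not depend on $n$; hence $\mu(\{|f - g| \geq \delta\}) = 0$ for every $\delta > 0$.

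To finish, I would write the set of disagreement as a countable union, $\{f \neq g\} = \bigcup_{k \in \mathbb{N}} \{|f - g| \geq 1/k\}$, and apply countable subadditivity to conclude $\mu(\{f \neq g\}) = 0$, that is $f = g$ almost everywhere. I do not anticipate any genuine obstacle here: the two points deserving care are the choice of the splitting threshold $\delta/2$, which is exactly what makes the triangle-inequality inclusion valid, and the concluding countable exhaustion, which upgrades the statement ``null for each fixed $\delta$'' to ``null on the entire difference set''.
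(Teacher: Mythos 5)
Your proposal is correct and follows exactly the route the paper intends: the corollary is stated immediately after Proposition \ref{alphaimplymeasure} precisely so that one reduces to convergence in measure and then invokes the almost-everywhere uniqueness of limits in measure, which you prove in full via the standard triangle-inequality inclusion and countable exhaustion. The paper leaves those details implicit; your write-up simply makes them explicit.
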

Even though the $\ap$-convergence implies convergence in measure, the reciprocal is not, in general, true. 
\begin{example} \label{examplemeasurenotalpha}
Let $(X, \mathbb{X} , \mu) = ([0,\infty), \mathcal{B}, \mathcal{L})$ and $f_n = \frac{1}{n^{1/p}} \chi_{[0,n]}.$ One can readily see that $(f_n)$ converges to $0$ in measure. However, it does not $\ap$-converge to $0$. In order to see that, let $(B_n)$ be a sequence of measurable sets such that $\mu(B_n^c) \to 0$ as $n \to \infty,$ and notice that 
\begin{equation*}
\begin{split}
\int_{B_n} f_n^p \, d\mu & = \frac{1}{n} \mu ([0,n] \cap B_n) 
\\ & = \frac{1}{n} \big(\mu ([0,n]) - \mu ([0,n] \cap B_n^c) \big) \\
& = 1 - \frac{1}{n}\mu ([0,n] \cap B_n^c) \\
& \to 1 \  \text{as} \ n \to \infty  ,
\end{split}
\end{equation*}
since \[\frac{1}{n}\mu ([0,n] \cap B_n^c) \leq \frac{1}{n}\mu (B_n^c) \to 0 \ \text{as} \ n \to \infty  .\]
\end{example}

\begin{remark}
The sequences in Examples \ref{examplealphanotLp} and \ref{examplemeasurenotalpha} are some of the canonical examples of sequences that converge in measure but not in $L_p$ to a certain limit. In light of the $\ap$-convergence, one has a way of distinguishing these two examples as one $\ap$-converges while the other does not.
\end{remark}

\subsection{Convergence almost in \texorpdfstring{$\bm{L_p}$}{L\unichar{"209A}}}

\begin{definition}
A sequence $(f_n)$ of measurable functions is said to converge almost in $L_p$ to a measurable function $f$ if for each $\delta > 0$ there exists a measurable set $E_\delta$ with $\mu(E_\delta) < \delta$ such that 
\[ \int_{E_\delta^c} |f_n - f|^p \, d\mu \to 0 \ \text{as} \ n \to \infty  . \]

\end{definition}
It is clear that convergence in $L_p$ implies convergence almost in $L_p$ (take $E_\delta = \emptyset$ for any $\delta > 0$), but the reciprocal is not true in general (consider the sequence in Example \ref{examplealphanotLp}). Furthermore, arguing by contraction as in the proof of Proposition \ref{alphaimplymeasure}, one can prove that convergence almost in $L_p$ implies convergence in measure. For an example in which a sequence converges in measure but not almost in $L_p$ consider Example \ref{examplemeasurenotalpha}. Convergence almost in $L_p$ is also the natural mode of convergence for almost in $L_p$ spaces introduced in \cite{bravo2012optimal,calabuig2019representation} .\par
Next, one wishes to explore the relationship between the $\ap$-convergence and the convergence almost in $L_p.$ First, it is proved that almost in $L_p$ convergence implies $\ap$-convergence.

\begin{proposition} \label{almostLimpliesalpha}
Let $(f_n)$ be a sequence of measurable functions that converges almost in $L_p$ to a measurable function $f$. Then $(f_n)$ $\ap$-converges to $f$. 
\end{proposition}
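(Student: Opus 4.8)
The plan is to build the sequence $(B_n)$ required by Definition \ref{alphaconv} out of the fixed exceptional sets supplied by the almost-in-$L_p$ hypothesis, by means of a diagonalization over a sequence of shrinking thresholds. The point to overcome is that in the definition of convergence almost in $L_p$ the set $E_\delta$ does not depend on $n$, whereas $\ap$-convergence requires an $n$-dependent family $(B_n)$; the passage between the two is achieved by letting $B_n$ equal one of these fixed sets, selected from an increasingly fine scale as $n$ grows.

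First, for each $k \in \mathbb{N}$ I would apply the hypothesis with $\delta = 1/k$ to obtain a measurable set $E_k$ with $\mu(E_k) < 1/k$ and $\int_{E_k^c} |f_n - f|^p \, d\mu \to 0$ as $n \to \infty$. Since this limit holds for each fixed $k$, one may select an index $N_k$ such that $\int_{E_k^c} |f_n - f|^p \, d\mu < 1/k$ for all $n \geq N_k$; replacing $N_k$ by $\max\{N_k, N_{k-1}+1\}$ if necessary, I arrange that $(N_k)$ is strictly increasing.

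Next, I would glue these sets together by setting $B_n = E_k^c$ whenever $N_k \leq n < N_{k+1}$ (and, say, $B_n = \emptyset$ for $n < N_1$). On the block $N_k \leq n < N_{k+1}$ one has $B_n^c = E_k$, so $\mu(B_n^c) < 1/k$, and since $n \geq N_k$ throughout this block, the choice of $N_k$ gives $\int_{B_n} |f_n - f|^p \, d\mu = \int_{E_k^c} |f_n - f|^p \, d\mu < 1/k$. As $n \to \infty$ the block index $k$ tends to infinity, whence both $\mu(B_n^c) \to 0$ and $\int_{B_n} |f_n - f|^p \, d\mu \to 0$, which is exactly what $\ap$-convergence demands.

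The only genuinely delicate point is the bookkeeping of the diagonalization, namely ensuring that the thresholds $N_k$ are strictly increasing so that the blocks partition all sufficiently large $n$, and then observing that on each block both the measure bound and the integral bound are inherited from the single value $1/k$. Once this is arranged, the conclusion is immediate.
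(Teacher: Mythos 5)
Your proof is correct and follows essentially the same route as the paper's: extract sets $E_k$ with $\mu(E_k)<1/k$, choose increasing thresholds $N_k$ at which the integral over $E_k^c$ drops below $1/k$, and define $B_n$ blockwise as $E_k^c$. The only cosmetic difference is your choice $B_n=\emptyset$ for $n<N_1$ (versus the paper's $E_1^c$), which is harmless since only the tail behaviour of $\mu(B_n^c)$ matters.
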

\begin{proof}
Since $(f_n)$ converges to $f$ almost in $L_p,$ for each $k \in \mathbb{N}$ there exist a measurable set $E_k$ with $\mu(E_k) < 1/k$ and a number $N(k) \in \mathbb{N}$ such that for each $n \geq N(k)$ one has \[\int_{E_k^c} |f_n - f|^p \, d\mu < \frac{1}{k}  . \]
One can assume, without loss of generality, that $N(k+1) > N(k) \ \forall k \in \mathbb{N}.$ For each $n \in \mathbb{N}$ let $B_n = E_1^c$ if $n \leq N(2)$ and $B_n = E_k^c$ if $N(k) < n \leq N(k+1) , \ k \in \mathbb{N} \setminus \{ 1\} . $ It is clear that $\mu(B_n^c) \to 0$ as $n \to \infty.$ \par 
Let $\varepsilon > 0$ and let $k \in \mathbb{N} \setminus \{1 \}$ be such that $1/k < \varepsilon.$ For $n > N(k)$ one has that 
\[\int_{B_n} |f_n - f|^p \, d \mu = \int_{E_{k+j}^c} |f_n - f|^p \, d\mu,  \]
where $j \in \{0,1,2, \ldots \}$ is such that $N(k+j) < n \leq N(k+j+1).$ Consequently, for $n > N(k),$
\[\int_{B_n} |f_n - f|^p \, d \mu < \frac{1}{k+j} \leq \frac{1}{k} < \varepsilon , \]
which concludes the proof.
\end{proof}
The reciprocal of the previous proposition is not true in general. 
\begin{example}
Let $(X, \mathbb{X} , \mu) = ([0,1], \mathcal{B}, \mathcal{L})$ and for each $n \in \mathbb{N}$ let $k(n), j(n) \in \mathbb{N}_0$ be such that $n = 2^{k(n)}+j(n), \ 0 \leq j(n) < 2^{k(n)},$ and set $F_n = [j(n)/2^{k(n)},(j(n)+1)/2^{k(n)}].$ Let $f_n = 2^{k(n)/p} \chi_{F_n}.$ Then $(f_n)$ $\ap$-converges to $0$ (take $B_n = F_n^c$), but it does not converge to $0$ almost in $L_p$. In order to verify the latter, let $\delta = 1/10^3$ and let $E_\delta$ be any measurable set with $\mu(E_\delta) < 1/10^3.$ For every $N \in \mathbb{N}$ one can always find $n \geq N$ such that 
\[\int_{E_\delta^c} f_n^p \, d\mu = 2^{k(n)} \mu(F_n \cap E_\delta^c) = 1  ,\]
therefore convergence almost in $L_p$ does not hold.
\end{example}
Even though $\ap$-convergence does not imply convergence almost in $L_p,$ the result holds if one passes to a subsequence. 
\begin{proposition} \label{alphaimpliesalmostsubsequence}
Let $(f_n)$ be a sequence of measurable functions that $\ap$-converges to a measurable function $f$. Then there exists a subsequence $(f_{k_n})$ that converges almost in $L_p$ to $f$. 
\end{proposition}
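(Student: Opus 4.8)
The plan is to exploit the freedom to pass to a subsequence in order to convert the moving, $n$-dependent sets $(B_n)$ supplied by the $\ap$-convergence into a single fixed exceptional set for each prescribed level $\delta$. The crux of the matter is that convergence almost in $L_p$ demands, for each $\delta > 0$, one fixed set $E_\delta$ with $\mu(E_\delta) < \delta$ that controls the tail integrals for \emph{all} large indices simultaneously, whereas $\ap$-convergence only furnishes a window $B_n$ that may drift with $n$. To bridge this gap, I would first thin out the sequence so that the measures of the complements become summable.

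Concretely, let $(B_n)$ be the sequence of measurable sets witnessing the $\ap$-convergence, so that $\mu(B_n^c) \to 0$ and $\int_{B_n} |f_n - f|^p \, d\mu \to 0$. Since $\mu(B_n^c) \to 0$, I would select a strictly increasing sequence of indices $(k_n)$ with $\mu(B_{k_n}^c) < 2^{-n}$ for every $n$; observe that the companion condition $\int_{B_{k_n}} |f_{k_n} - f|^p \, d\mu \to 0$ persists automatically along any subsequence, so no extra care is needed for the integral part.

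The heart of the argument is the construction of the exceptional set as a \emph{tail union}. Given $\delta > 0$, choose $m$ so large that $2^{-(m-1)} < \delta$ and set $E_\delta = \bigcup_{n \geq m} B_{k_n}^c$. By countable subadditivity one obtains
\[ \mu(E_\delta) \leq \sum_{n \geq m} \mu(B_{k_n}^c) < \sum_{n \geq m} 2^{-n} = 2^{-(m-1)} < \delta , \]
as required. The decisive feature is the nesting $E_\delta^c = \bigcap_{n \geq m} B_{k_n} \subseteq B_{k_n}$, valid for every $n \geq m$, which allows me to dominate the integral over the fixed set $E_\delta^c$ by the integral over the drifting set $B_{k_n}$:
\[ \int_{E_\delta^c} |f_{k_n} - f|^p \, d\mu \leq \int_{B_{k_n}} |f_{k_n} - f|^p \, d\mu \to 0 \quad \text{as } n \to \infty . \]
Since $\delta > 0$ was arbitrary, this establishes convergence almost in $L_p$ for the subsequence $(f_{k_n})$.

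I do not expect a genuine obstacle in this argument; the single delicate point is arranging the summability of $\mu(B_{k_n}^c)$ so that the tail union has arbitrarily small measure. This is exactly the place where passing to a subsequence is indispensable: without it one cannot force the complements to shrink fast enough to be swept into a fixed set of small measure, and indeed the preceding example shows that the full sequence need not converge almost in $L_p$. The remaining ingredients are only monotonicity of the integral and countable subadditivity of the measure.
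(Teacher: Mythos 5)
Your argument is correct and is essentially the paper's own proof: you pick $k_n$ with $\mu(B_{k_n}^c)<2^{-n}$ and take $E_\delta^c=\bigcap_{n\ge m}B_{k_n}$, which is exactly the set $C_m$ the paper constructs, with the same monotonicity and subadditivity estimates. No substantive difference.
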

\begin{proof}
Let $(B_n)$ be the sequence of measurable sets associated with the $\ap$-convergence of $(f_n)$. For each $n \in \mathbb{N}$ there exists $k_n \geq n$ such that $\mu(B_{k_n}^c) < 1/2^n.$ Set $C_n = \bigcap_{i=n}^\infty B_{k_i}$ and notice that $C_n \subseteq C_{n+1}$ $\forall n \in \mathbb{N}.$ Moreover, 
\[\mu(C_n^c) \leq \frac{1}{2^{n-1}} \to 0 \ \text{as} \  n \to \infty  ,\] and \[\int_{C_n} |f_{k_n}-f|^p \, d\mu \leq  \int_{B_{k_n}} |f_{k_n}-f|^p \ d\mu \to 0 \ \text{as} \ n\to \infty  .\]
Next it is shown that $(f_{k_n})$ converges to $f$ almost in $L_p$. Let $\delta > 0$ and since $\mu(C_n^c) \to 0$ as $n \to \infty$ there exists $N \in \mathbb{N}$ such that $\mu(C_N^c) < \delta.$ Set $E_\delta = C_N^c$ and let $\varepsilon > 0.$ There exists $K \in \mathbb{N}$ such that 
\[\forall n \geq K \quad \int_{C_n} |f_{k_n}-f|^p \ d\mu < \varepsilon  .\]
Thus, for $n \geq \max\{N,K\},$ $C_N \subseteq C_n$ and 
\[\int_{E_\delta^c} |f_{k_n}-f|^p \ d\mu = \int_{C_N} |f_{k_n}-f|^p \ d\mu \leq \int_{C_n} |f_{k_n}-f|^p \ d\mu < \varepsilon ,\]
therefore $(f_{k_n})$ converges to $f$ almost in $L_p$.
\end{proof}

\subsection{Case \texorpdfstring{$\bm{\mu(X) < \infty}$}{\unichar{"3BC}(X) < \unichar{"221E}}}
$ $\newline
A classical theorem of Egorov says that in a finite measure space, convergence almost everywhere implies convergence almost uniformly. Moreover, it is clear that in a finite measure space, convergence almost uniformly implies convergence almost in $L_p.$ Combining this with Proposition \ref{almostLimpliesalpha} yields:

\begin{proposition}
Let $(f_n),f$ be measurable functions on a finite measure space. If $(f_n)$ converges to $f$ almost everywhere, then $(f_n)$ $\ap$-converges to $f$.
\end{proposition}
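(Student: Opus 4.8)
The plan is to establish the final proposition by chaining together three implications, two of which are classical and one of which is already proved in the excerpt. The statement asserts that on a finite measure space, almost everywhere convergence implies $\ap$-convergence. Since Proposition \ref{almostLimpliesalpha} already gives that convergence almost in $L_p$ implies $\ap$-convergence, it suffices to deduce convergence almost in $L_p$ from convergence almost everywhere, and the natural intermediate step is almost uniform convergence.

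First I would invoke Egorov's theorem: on a finite measure space, if $(f_n)$ converges to $f$ almost everywhere, then $(f_n)$ converges to $f$ almost uniformly. This is precisely where the finiteness of $\mu(X)$ is essential, and it is the only nontrivial analytic input; I would cite it as a known result rather than reprove it. Next I would show that almost uniform convergence implies convergence almost in $L_p$ on a finite measure space. Given $\delta > 0$, almost uniform convergence furnishes a set $E_\delta$ with $\mu(E_\delta) < \delta$ on whose complement $f_n \to f$ uniformly; then
\[
\int_{E_\delta^c} |f_n - f|^p \, d\mu \leq \mu(E_\delta^c) \, \sup_{x \in E_\delta^c} |f_n(x) - f(x)|^p \leq \mu(X) \, \sup_{x \in E_\delta^c} |f_n(x) - f(x)|^p \to 0,
\]
where the bound $\mu(E_\delta^c) \leq \mu(X) < \infty$ together with uniform convergence forces the right-hand side to zero.

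Finally, having obtained convergence almost in $L_p$, I would apply Proposition \ref{almostLimpliesalpha} directly to conclude that $(f_n)$ $\ap$-converges to $f$, which completes the argument. Since the excerpt's remark preceding the statement already notes that Egorov's theorem and the elementary estimate above combine with Proposition \ref{almostLimpliesalpha} to yield the result, the proof is genuinely a short composition of known facts rather than a fresh construction.

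I do not anticipate a serious obstacle here, as each link in the chain is either classical or already available. The only point requiring care is the role of finiteness: the estimate $\mu(E_\delta^c) \leq \mu(X)$ must remain finite for the uniform bound to control the integral, so I would make sure to state explicitly that the hypothesis $\mu(X) < \infty$ is used exactly at this step. The rest is bookkeeping.
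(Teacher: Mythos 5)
Your proof is correct and follows exactly the paper's own argument: Egorov's theorem to pass from almost everywhere to almost uniform convergence, the elementary bound $\int_{E_\delta^c}|f_n-f|^p\,d\mu \leq \mu(X)\sup_{E_\delta^c}|f_n-f|^p$ to reach convergence almost in $L_p$, and then Proposition \ref{almostLimpliesalpha}. The paper merely states this chain in the paragraph preceding the proposition, while you spell out the middle estimate explicitly.
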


\section{Cauchy sequences and completeness theorems} \label{cauchysection}
\begin{definition}
A sequence $(f_n)$ of measurable functions is said to be:
\begin{enumerate}[(i)]
\item $\alpha_p$-Cauchy if there exists a sequence of measurable sets $(B_n)$ with $\mu (B_n^c) \to 0  \ \text{as} \ n\to \infty $ such that 
\[\int_{B_n \cap B_m} |f_n - f_m|^p \, d\mu \to 0 \ \text{as} \ n,m \to \infty  .\]

\item Cauchy almost in $L_p$ if for each $\delta > 0$ there exists a measurable set $E_\delta$ with $\mu(E_\delta) < \delta$ such that 
\[\int_{E_\delta^c} |f_n - f_m|^p \, d\mu \to 0 \ \text{as} \ n,m \to \infty  .\]

\end{enumerate}

\end{definition}
If a sequence $(f_n)$ $\ap$-converges to $f$ then it is $\ap$-Cauchy. This follows from the inequality 
\[\int_{B_n \cap B_m} |f_n - f_m|^p \, d\mu \leq 2^{p-1} \int_{B_n } |f_n - f|^p  \, d\mu + 2^{p-1} \int_{B_m } |f - f_m|^p  \, d\mu  .\]
It is also clear that if $(f_n)$ converges to $f$ almost in $L_p,$ then $(f_n)$ is Cauchy almost in $L_p.$ \par
Since Cauchy sequences converge both in the $L_p$ and measure senses, one also expects the same to hold with both $\ap$-convergence and convergence almost in $L_p.$
First, it is proved that given a sequence $(f_n)$ that is Cauchy almost in $L_p,$ there exists a measurable function $f$ such that $(f_n)$ converges to $f$ almost in $L_p$. Then, with this result at hand, one will be able to prove that completeness in this sense also holds for the $\ap$-convergence.

\begin{theorem} \label{completeAp}
Let $(f_n)$ be a sequence of measurable functions. If $(f_n)$ is Cauchy almost in $L_p,$ then there exists a measurable function $f$ to which $(f_n)$ converges almost in $L_p.$
\end{theorem}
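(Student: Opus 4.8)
The plan is to transfer the completeness of $L_p$ from each ``good'' set supplied by the hypothesis to the whole space, by gluing the resulting local limits into a single function. First I would fix a sequence $\delta_k \downarrow 0$, say $\delta_k = 1/k$, and for each $k$ invoke the definition of Cauchy almost in $L_p$ to select a measurable set $E_k$ with $\mu(E_k) < 1/k$ such that
\[\int_{E_k^c} |f_n - f_m|^p \, d\mu \to 0 \quad \text{as} \ n, m \to \infty .\]
The set $N_0 := \bigcap_k E_k$ satisfies $\mu(N_0) \le \inf_k \mu(E_k) = 0$, so the complements $E_k^c$ cover $X$ up to the null set $N_0$, which will be the only place where the global limit is left undefined (and may be set to $0$).

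On each fixed $E_k^c$ I would manufacture a limit by restricting the sequence to that set and applying completeness of $L_p$. A technical point deserves attention: since the $f_n$ are only assumed measurable and need not lie in $L_p$, they cannot be fed directly into the $L_p$ completeness theorem. Instead I would note that the displayed Cauchy condition furnishes an index $N_k$ with $f_n - f_{N_k} \in L_p(E_k^c)$ for all $n \ge N_k$, and that $(f_n - f_{N_k})_{n \ge N_k}$ is Cauchy in $L_p(E_k^c)$. Completeness then yields $h_k \in L_p(E_k^c)$ with $f_n - f_{N_k} \to h_k$ in $L_p(E_k^c)$; setting $f^{(k)} := f_{N_k} + h_k$ produces a measurable function on $E_k^c$ satisfying $\int_{E_k^c} |f_n - f^{(k)}|^p \, d\mu \to 0$.

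It remains to glue the $f^{(k)}$ into a single $f$ and to verify convergence almost in $L_p$. On an overlap $E_k^c \cap E_l^c$ both $f^{(k)}$ and $f^{(l)}$ are $L_p$-limits of the same restricted sequence, so the a.e.-uniqueness of $L_p$-limits forces $f^{(k)} = f^{(l)}$ almost everywhere there; discarding the countable union of the associated null sets, I would define $f := f^{(k)}$ on each $E_k^c$ and $f := 0$ on $N_0$, which is measurable and well defined up to a null set. For convergence almost in $L_p$, given $\delta > 0$ I would pick $k$ with $1/k < \delta$ and take $E_k$ itself as the required $E_\delta$: then $\mu(E_\delta) < \delta$ and $\int_{E_\delta^c} |f_n - f|^p \, d\mu = \int_{E_k^c} |f_n - f^{(k)}|^p \, d\mu \to 0$.

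I expect the main obstacle to be precisely this gluing step, rather than the single-set completeness. The hypothesis guarantees, for each $\delta$ separately, good behavior on some set $E_\delta^c$, and these sets are a priori unrelated; the crux is to reconcile the separately constructed local limits into one global function, which is exactly what the a.e.-uniqueness of $L_p$-limits on overlaps provides. The secondary care of tracking that the $f_n$ may fail to be globally $p$-integrable is handled throughout by passing to differences.
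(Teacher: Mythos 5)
Your proof is correct, but it follows a genuinely different route from the paper's. The paper first shows that a sequence that is Cauchy almost in $L_p$ is Cauchy in measure (by a contradiction argument), extracts from this a subsequence $(f_{k_n})$ converging almost everywhere to some measurable $f$, identifies $f$ as the almost-in-$L_p$ limit of that subsequence via Fatou's lemma applied on each exceptional-set complement $E_\delta^c$, and finally upgrades from the subsequence to the full sequence by the standard ``Cauchy plus convergent subsequence'' argument. You instead invoke the Riesz--Fischer completeness of $L_p(E_k^c)$ on each good set, correctly handling the fact that the $f_n$ need not be globally $p$-integrable by passing to the differences $f_n - f_{N_k}$, and then glue the local limits $f^{(k)}$ using a.e.\ uniqueness of $L_p$-limits on overlaps (the gluing is sound: taking, say, $f = f^{(k(x))}$ with $k(x)$ the least index for which $x \in E_k^c$ gives a measurable global function agreeing a.e.\ with each $f^{(k)}$ on $E_k^c$, and the final verification with $E_\delta = E_k$ for $1/k < \delta$ goes through). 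Your approach buys convergence of the \emph{full} sequence directly, with no subsequence extraction and no appeal to convergence in measure or Fatou, at the price of the coherence argument for the locally defined limits; the paper's approach gets the candidate limit ``for free'' from the a.e.-convergent subsequence and leans on classical facts about convergence in measure, but must then close the loop from subsequence back to the whole sequence.
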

\begin{proof}
Assume that $(f_n)$ is Cauchy almost in $L_p.$ Then, arguing by contradiction, one can prove that $(f_n)$ is Cauchy in measure. This implies that there is a subsequence $(f_{k_n})$ and a measurable function $f$ such that $(f_{k_n})$ converges to $f$ almost everywhere. From that one can conclude that $(f_{k_n})$ converges to $f$ almost in $L_p$. Indeed, given $\delta > 0$ and $E_\delta$ with $\mu(E_\delta) < \delta$ such that 
\[ \int_{E_\delta^c} |f_{k_n} - f_{k_m}| \, d\mu \to 0 \ \text{as} \ n,m \to \infty  , \]
by Fatou's lemma one has:
\begin{equation*}
\begin{split}
\int_{E_\delta^c} |f_{k_n} - f| \, d\mu & = \int_{E_\delta^c} \lim\limits_{m \to \infty}|f_{k_n} - f_{k_m}| \, d\mu \\
& \leq \liminf\limits_{m \to \infty} \int_{E_\delta^c} |f_{k_n} - f_{k_m}| \, d\mu \\
& \to 0 \ \text{as} \ n \to \infty .
\end{split}
\end{equation*}
The result follows by a standard argument. 
\end{proof}

\begin{theorem} \label{completealphap}
If $(f_n)$ is a $\ap$-Cauchy sequence, then there exists a measurable function $f$ such that $f_n \apto f. $
\end{theorem}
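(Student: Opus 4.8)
The plan is to reduce the statement to the completeness result for convergence almost in $L_p$ (Theorem \ref{completeAp}) by extracting a suitable subsequence, and then to promote the subsequential limit to a limit of the whole sequence using the $\ap$-Cauchy hypothesis. To begin, let $(B_n)$ be the sequence of measurable sets witnessing that $(f_n)$ is $\ap$-Cauchy, so that $\mu(B_n^c) \to 0$ and $\int_{B_n \cap B_m} |f_n - f_m|^p \, d\mu \to 0$ as $n,m \to \infty$. Mimicking the construction in the proof of Proposition \ref{alphaimpliesalmostsubsequence}, I would choose indices $k_n \geq n$ with $\mu(B_{k_n}^c) < 2^{-n}$ and set $C_n = \bigcap_{i=n}^\infty B_{k_i}$, an increasing sequence with $\mu(C_n^c) \leq 2^{-(n-1)} \to 0$. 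The key point is that for $n,m \geq N$ one has $C_N \subseteq B_{k_n} \cap B_{k_m}$, whence $\int_{C_N} |f_{k_n} - f_{k_m}|^p \, d\mu \leq \int_{B_{k_n} \cap B_{k_m}} |f_{k_n} - f_{k_m}|^p \, d\mu \to 0$; taking $E_\delta = C_N^c$ for $N$ large enough that $\mu(C_N^c) < \delta$ then shows that $(f_{k_n})$ is Cauchy almost in $L_p$.

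By Theorem \ref{completeAp} there is then a measurable function $f$ to which $(f_{k_n})$ converges almost in $L_p$, and by Proposition \ref{almostLimpliesalpha} this subsequence $\ap$-converges to $f$. I would denote by $(\tilde{B}_n)$ the sets witnessing this last convergence, so that $\mu(\tilde{B}_n^c) \to 0$ and $\int_{\tilde{B}_n} |f_{k_n} - f|^p \, d\mu \to 0$. It remains to upgrade convergence of the subsequence to convergence of $(f_n)$ itself, and this is where the Cauchy hypothesis is essential.

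The decisive step is the choice of witnessing sets for the full sequence: I would set $D_n = B_n \cap B_{k_n} \cap \tilde{B}_n$, so that $\mu(D_n^c) \leq \mu(B_n^c) + \mu(B_{k_n}^c) + \mu(\tilde{B}_n^c) \to 0$. Splitting with the inequality $|f_n - f|^p \leq 2^{p-1}|f_n - f_{k_n}|^p + 2^{p-1}|f_{k_n} - f|^p$ and using the inclusions $D_n \subseteq B_n \cap B_{k_n}$ on the first term and $D_n \subseteq \tilde{B}_n$ on the second, one obtains
\[ \int_{D_n} |f_n - f|^p \, d\mu \leq 2^{p-1}\int_{B_n \cap B_{k_n}} |f_n - f_{k_n}|^p \, d\mu + 2^{p-1}\int_{\tilde{B}_n} |f_{k_n} - f|^p \, d\mu . \]
The second summand vanishes as $n \to \infty$ by the choice of $(\tilde{B}_n)$. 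The first summand also vanishes: since $k_n \geq n$, both indices tend to infinity with $n$, so for any $\varepsilon>0$ the threshold $M$ furnished by the $\ap$-Cauchy condition applies to the pair $(n,k_n)$ once $n \geq M$, forcing $\int_{B_n \cap B_{k_n}} |f_n - f_{k_n}|^p \, d\mu \to 0$. Hence $\int_{D_n} |f_n - f|^p \, d\mu \to 0$ and $f_n \apto f$.

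The main obstacle I anticipate lies precisely in this coordination of the three families of sets. One must arrange that the intermediary index in the triangle inequality is matched to $n$ (here $k_n$), so that the Cauchy term is killed by the genuine double-limit of the $\ap$-Cauchy condition, while the subsequence term is killed by the \emph{matched-index} $\ap$-convergence of $(f_{k_n})$; a naive choice such as intersecting with a single slowly growing $C_{\sigma(n)}$ would leave the second term as $\int_{C_{\sigma(n)}}|f_{k_n}-f|^p\,d\mu$ with both the set and the function varying, which need not tend to zero. Passing through Proposition \ref{almostLimpliesalpha} to obtain the sets $(\tilde{B}_n)$ with indices synchronized to the subsequence is what removes this difficulty.
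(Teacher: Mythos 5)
Your proof is correct and follows essentially the same route as the paper: extract a subsequence that is Cauchy almost in $L_p$, apply Theorem \ref{completeAp} and Proposition \ref{almostLimpliesalpha} to get a limit $f$ with synchronized witnessing sets, and then use the triple intersection $B_n \cap B_{k_n} \cap \tilde{B}_n$ with the same $2^{p-1}$-splitting to pass from the subsequence to the full sequence. The only difference is that you spell out the details of the first step, which the paper dispatches with a reference to the proof of Proposition \ref{alphaimpliesalmostsubsequence}.
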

\begin{proof}
Let $(f_n)$ be a $\ap$-Cauchy sequence. One can prove, in a similar fashion as in Proposition \ref{alphaimpliesalmostsubsequence}, that there exists a subsequence $(f_{k_n})$ which is Cauchy almost in $L_p$. Then, by the previous theorem there exists a measurable function $f$ such that $(f_{k_n})$ converges to $f$ almost in $L_p$. Hence, by Proposition \ref{almostLimpliesalpha}, $(f_{k_n})$ $\ap$-converges to $f$. Now let $\varepsilon > 0$. There exist $N_1,N_2 \in \mathbb{N}$ such that 
\[\forall n,m \geq N_1 \quad  \int_{B_n \cap B_m} |f_n - f_m|^p \, d\mu < \frac{\varepsilon}{2^{p}} ,  \qquad \forall n \geq N_2 \quad \int_{D_n} |f_{k_n} - f|^p \, d\mu < \frac{\varepsilon}{2^{p}}  , \]
where $(B_n),$ $(D_n)$ are the sequences of measurable sets associated with the $\ap$-Cauchy property of $(f_n)$ and the $\ap$-convergence of $(f_{k_n}),$ respectively.
For each $n \in \mathbb{N},$ let $C_n = B_n \cap D_n \cap B_{k_n}.$ Clearly, $\mu(C_n^c) \to 0$ as $n \to \infty.$ Finally,
for $n \geq N = \max\{N_1,N_2 \}$ one has
\begin{equation*}
\int_{C_n} |f_n - f|^p \, d\mu \leq 2^{p-1} \int_{B_n \cap B_{k_n}} |f_n - f_{k_n}|^p \, d\mu + 2^{p-1} \int_{D_n} |f_{k_n}-f|^p \, d\mu < \varepsilon ,
\end{equation*}
which completes the proof.
\end{proof}

\section{Convergence theorems} \label{convergencesection}

\begin{proposition}
Let $(f_n),f$ be measurable functions. Then 
\begin{equation*}
f_n \to f \  \text{in} \ L_p \quad \Leftrightarrow \quad
\begin{dcases} 
f_n \apto f , \\
 \forall (D_n) \ \Big( \mu(D_n) \to 0 \ \Rightarrow \ \int_{D_n} |f_n - f|^p \, d\mu \to 0 \Big) \ \text{as} \ n \to \infty  .
\end{dcases}
\end{equation*}
\end{proposition}
\begin{proof}
$ $\newline
$(\Rightarrow)$ This is clear. \\
$(\Leftarrow)$ Take $D_n = B_n^c,$ where $(B_n)$ is the sequence of measurable sets associated with the $\ap$-convergence of $(f_n).$
\end{proof}

\begin{theorem} \label{alphaconvergencethm}
Let $(f_n),f$ be measurable functions. Then 
\begin{equation*}
f_n \apto f  \quad \Leftrightarrow \quad
\begin{dcases} 
f_n \to f \ \text{in measure}  , \\ 
\exists \delta > 0 \  \int_{E_n^c(\delta)} |f_n - f|^p \, d\mu \to 0  \ \text{as} \ n \to \infty ,
\end{dcases}
\end{equation*}
where $E_n(\delta) = \{x \in X \ | \ |f_n(x) - f(x)| \geq \delta \}.$ 
\end{theorem}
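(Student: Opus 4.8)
The plan is to prove the two implications separately, observing at the outset that one half of the forward direction is already supplied by Proposition \ref{alphaimplymeasure}.

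For the reverse implication I would argue directly by exhibiting the witnessing sequence of sets required by Definition \ref{alphaconv}. Given that $f_n \to f$ in measure and that there is some $\delta > 0$ with $\int_{E_n^c(\delta)} |f_n - f|^p \, d\mu \to 0$, the natural choice is $B_n = E_n^c(\delta)$ for that same $\delta$. Then $B_n^c = E_n(\delta)$, so $\mu(B_n^c) = \mu(E_n(\delta)) \to 0$ is precisely convergence in measure specialized to this $\delta$, while $\int_{B_n} |f_n - f|^p \, d\mu \to 0$ is the integral hypothesis read off verbatim. This gives $f_n \apto f$ with essentially no computation.

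For the forward implication, convergence in measure is exactly Proposition \ref{alphaimplymeasure}, so the only genuine content is to produce a $\delta > 0$ for which $\int_{E_n^c(\delta)} |f_n - f|^p \, d\mu \to 0$. Let $(B_n)$ be the sequence of sets furnished by the $\ap$-convergence. My plan is to show that in fact \emph{every} $\delta > 0$ works, by splitting $E_n^c(\delta) = \big(E_n^c(\delta) \cap B_n\big) \cup \big(E_n^c(\delta) \cap B_n^c\big)$. On the first piece the integral is dominated by $\int_{B_n} |f_n - f|^p \, d\mu$, which tends to zero by hypothesis. On the second piece the integrand is bounded pointwise by $\delta^p$, since $|f_n - f| < \delta$ throughout $E_n^c(\delta)$; hence that integral is at most $\delta^p \mu(B_n^c)$, which tends to zero because $\mu(B_n^c) \to 0$. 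Adding the two estimates yields the claim.

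I do not expect a real obstacle: both directions collapse to a single set decomposition together with the elementary bound $|f_n - f|^p < \delta^p$ on $E_n^c(\delta)$. The only subtlety worth flagging is the role of the quantifier $\exists \delta$ in the statement. On the forward side it is harmless, since the decomposition argument actually establishes the stronger fact that the integral vanishes for \emph{all} $\delta > 0$; on the reverse side a single $\delta$ suffices, as it is exactly what is needed to define the sets $B_n$.
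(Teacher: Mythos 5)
Your proposal is correct and rests on the same two ingredients as the paper's proof: the choice $B_n = E_n^c(\delta)$ for the reverse implication, and, for the forward one, the decomposition of $E_n^c(\delta)$ over $B_n$ and $B_n^c$ together with the pointwise bound $|f_n - f|^p < \delta^p$ on $E_n^c(\delta)$. The only difference is presentational: the paper runs the forward direction as a proof by contradiction with $\delta = 1$, whereas your direct estimate is cleaner and immediately yields the ``for all $\delta > 0$'' strengthening that the paper obtains separately in Lemma \ref{lemmameasure}.
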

\begin{proof}
$ $\newline
$(\Leftarrow)$ Simply consider $B_n = E_n^c(\delta).$ \\
$(\Rightarrow)$ Due to Proposition \ref{alphaimplymeasure}, one already knows that $\ap$-convergence implies convergence in measure. In order to prove the second condition, suppose, towards a contradiction, that for every $\delta > 0$ one has that $\int_{E_n^c(\delta)} |f_n - f|^p \, d\mu \not\to 0.$ Take $\delta = 1$ and set $E_n = E_n(1).$ Then 
\[\exists \varepsilon>0 \quad \forall n \in \mathbb{N} \quad \exists k_n \geq n \quad \int_{E_{k_n}^c} |f_{k_n} - f|^p \, d\mu \geq \varepsilon .\]
Moreover, there exists $N \in \mathbb{N}$ such that for $k \geq N$ one has 
\[\mu(B_k^c) < \frac{\varepsilon}{4}  , \quad \int_{B_k} |f_k - f|^p \, d\mu < \frac{\varepsilon}{4} .\] Thus,
\begin{equation*}
\begin{split}
\varepsilon & \leq \int_{E_{k_N}^c}|f_{k_N}-f|^p \, d\mu \\
& = \int_{E_{k_N}^c \cap B_{k_N}}|f_{k_N}-f|^p \, d\mu + \int_{E_{k_N}^c \cap B_{k_N}^c}|f_{k_N}-f|^p \, d\mu \\
& < \int_{B_{k_N}}|f_{k_N}-f|^p \, d\mu + \mu(B_{k_N}^c) \\
& < \varepsilon/2  ,
\end{split}
\end{equation*}
a contradiction. 
\end{proof}
\begin{lemma} \label{lemmameasure}
Suppose that $(f_n)$ converges to $f$ in measure. Then the following conditions are equivalent: 
\begin{enumerate}[(i)]
 \item $ \exists \delta > 0 \  \int_{E_n^c(\delta)} |f_n - f|^p \, d\mu \to 0 \ \text{as} \ n \to \infty,$ 
\item $\forall \delta > 0 \  \int_{E_n^c(\delta)} |f_n - f|^p \, d\mu \to 0 \ \text{as} \ n \to \infty,$
\end{enumerate}
where $E_n(\delta) = \{x \in X \ | \ |f_n(x) - f(x)| \geq \delta \}.$
\end{lemma}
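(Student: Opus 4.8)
The plan is to prove the non-trivial implication (i) $\Rightarrow$ (ii), since (ii) $\Rightarrow$ (i) is immediate upon specializing the universally quantified statement to any single $\delta$. So fix the threshold $\delta_0 > 0$ provided by (i), let $\delta > 0$ be arbitrary, and split into two cases according to whether $\delta \le \delta_0$ or $\delta > \delta_0$. Throughout I use that $E_n^c(\delta) = \{x \in X \mid |f_n(x)-f(x)| < \delta\}$ and that the integrand $|f_n - f|^p$ is non-negative.

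In the first case, $\delta \le \delta_0$, the sets are nested: $E_n^c(\delta) = \{|f_n-f| < \delta\} \subseteq \{|f_n-f| < \delta_0\} = E_n^c(\delta_0)$. By monotonicity of the integral of a non-negative function one gets $\int_{E_n^c(\delta)} |f_n-f|^p \, d\mu \le \int_{E_n^c(\delta_0)} |f_n-f|^p \, d\mu \to 0$, so this case needs nothing beyond hypothesis (i).

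The harder case is $\delta > \delta_0$, where the domain of integration grows and the hypothesis no longer dominates directly. Here I decompose $E_n^c(\delta) = E_n^c(\delta_0) \cup \big(E_n^c(\delta) \setminus E_n^c(\delta_0)\big)$, observing that the second piece is exactly $\{x \in X \mid \delta_0 \le |f_n(x)-f(x)| < \delta\}$. On that piece the integrand is bounded by $\delta^p$, and the piece is contained in $E_n(\delta_0)$, so $\int_{E_n^c(\delta) \setminus E_n^c(\delta_0)} |f_n-f|^p \, d\mu \le \delta^p\, \mu(E_n(\delta_0))$. This is the single point where convergence in measure is used: it yields $\mu(E_n(\delta_0)) \to 0$, so this term vanishes, while $\int_{E_n^c(\delta_0)} |f_n-f|^p\,d\mu \to 0$ by (i); adding the two estimates finishes the argument.

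I expect the only genuine content to be the case $\delta > \delta_0$, and specifically the observation that enlarging the threshold from $\delta_0$ to $\delta$ adds mass only on the set $\{\delta_0 \le |f_n-f|\} = E_n(\delta_0)$, whose measure is controlled by convergence in measure; the crude bound $\delta^p\,\mu(E_n(\delta_0))$ then forces the extra term to zero. Everything else is monotonicity of the integral and a two-term decomposition, so no further obstacle is anticipated.
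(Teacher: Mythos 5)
Your proposal is correct and follows essentially the same route as the paper: the case $\delta \le \delta_0$ is handled by monotonicity of the integral over nested sets, and the case $\delta > \delta_0$ uses the identical decomposition $E_n^c(\delta) = E_n^c(\delta_0) \cup \bigl(E_n^c(\delta) \cap E_n(\delta_0)\bigr)$ with the bound $\delta^p\,\mu\bigl(E_n(\delta_0)\bigr)$ on the extra piece, controlled by convergence in measure. No differences worth noting.
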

\begin{proof} 
Suppose that there exists $\tilde{\delta} > 0$ such that \[\int_{E_n^c(\tilde{\delta})} |f_n - f|^p \, d\mu \to 0 \  \text{as} \ n \to \infty  . \] 
If $0 < \delta < \tilde{\delta},$ then $E_n^c(\delta) \subseteq E_n^c(\tilde{\delta})$ and hence 
\[\int_{E_n^c(\delta)} |f_n - f|^p \, d\mu \leq \int_{E_n^c(\tilde{\delta})} |f_n - f|^p \, d\mu \to 0 \  \text{as} \ n \to \infty  . \]
If $0 < \tilde{\delta} < \delta,$ then 
\begin{equation*}
\begin{split}
\int_{E_n^c(\delta)} |f_n - f|^p \, d\mu & = \int_{E_n^c(\delta) \cap E_n(\tilde{\delta})} |f_n - f|^p \, d\mu + \int_{E_n^c(\delta) \cap E_n^c(\tilde{\delta})} |f_n - f|^p \, d\mu \\
& < \delta^p \mu\big(E_n(\tilde{\delta})\big) + \int_{E_n^c(\tilde{\delta})} |f_n - f|^p \, d\mu \\
& \to 0 \ \text{as} \ n \to \infty  .
\end{split}
\end{equation*}
\end{proof}
The next concept, inspired by \cite{dudley, kelley}, serves as an attempt to organize some of the important properties that relevant notions of convergence satisfy (for instance, metric convergence notions):
\begin{definition}
A sequential convergence class consists of a pair $(M, \alpha),$ where $M$ is a set and $\alpha$ is a relation between sequences $(f_n)\subseteq M$ and elements $f \in M$ denoted by $f_n \xrightarrow{\alpha} f,$ such that:
\begin{enumerate}[(i)]
\item If $f_n= f \ \forall n \in \mathbb{N},$ then $f_n \xrightarrow{\alpha} f$,  \\ 
\item If $f_n \xrightarrow{\alpha} f$ and $(f_{k_n}) \subseteq (f_n),$ then $f_{k_n} \xrightarrow{\alpha} f$, \\
\item If $f_n \centernot{\xrightarrow{\alpha}} f,$ then there exists $(f_{k_n}) \subseteq (f_n)$ such that for any subsequence $(h_n) \subseteq (f_{k_n}),$  $h_n \centernot{\xrightarrow{\alpha}} f$, \\
\item If $f_n \xrightarrow{\alpha} f$ and $\forall n \in \mathbb{N} \ f_{n,k}\xrightarrow{\alpha} f_n,$ then 
\[\forall n \in \mathbb{N} \quad \exists k_n \geq n \quad  f_{n,k_n} \xrightarrow{\alpha} f .\]
\end{enumerate}
\end{definition}
\begin{theorem}
$(M(X), \alpha_p)$ is a sequential convergence class.
\end{theorem}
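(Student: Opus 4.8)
\section*{Proof proposal}

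The plan is to verify the four defining properties (i)--(iv) of a sequential convergence class directly, drawing on the characterizations of $\ap$-convergence already established. Properties (i), (ii) and (iv) will be dispatched by elementary manipulations of the associated sequences of measurable sets, while property (iii) is the genuine obstacle and will rely on the metric-like description of $\ap$-convergence furnished by Theorem \ref{alphaconvergencethm} together with Lemma \ref{lemmameasure}.

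For (i), taking $B_n = X$ for every $n$ gives $\mu(B_n^c) = 0$ and $\int_{B_n}|f_n - f|^p\,d\mu = 0$ when $f_n = f$, so $f_n \apto f$. For (ii), if $(B_n)$ witnesses $f_n \apto f$ and $(f_{k_n})$ is a subsequence, then $(B_{k_n})$ witnesses $f_{k_n} \apto f$, since subsequences of the null sequences $\mu(B_n^c)$ and $\int_{B_n}|f_n-f|^p\,d\mu$ are again null. For (iv), let $(B_n)$ witness $f_n \apto f$ and, for each fixed $n$, let $(D_{n,k})_k$ witness $f_{n,k} \apto f_n$. Since $\mu(D_{n,k}^c) \to 0$ and $\int_{D_{n,k}}|f_{n,k}-f_n|^p\,d\mu \to 0$ as $k \to \infty$, I would choose for each $n$ an index $k_n \geq n$ with $\mu(D_{n,k_n}^c) < 1/n$ and $\int_{D_{n,k_n}}|f_{n,k_n}-f_n|^p\,d\mu < 1/n$. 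Setting $C_n = B_n \cap D_{n,k_n}$ and using $|a-c|^p \le 2^{p-1}|a-b|^p + 2^{p-1}|b-c|^p$ on $C_n$, both $\mu(C_n^c) \le \mu(B_n^c)+\mu(D_{n,k_n}^c)$ and $\int_{C_n}|f_{n,k_n}-f|^p\,d\mu$ tend to $0$, so $(C_n)$ witnesses $f_{n,k_n} \apto f$.

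The heart of the argument is (iii), which I would prove through its contrapositive, the Urysohn-type subsequence principle: if every subsequence of $(f_n)$ admits a further subsequence that $\ap$-converges to $f$, then $f_n \apto f$. I would verify the two conditions of Theorem \ref{alphaconvergencethm}. First, convergence in measure: if it failed, there would be $\delta,\varepsilon>0$ and a subsequence along which $\mu(E_{k_n}(\delta)) \ge \varepsilon$; any further subsequence $\ap$-converging to $f$ would converge to $f$ in measure by Proposition \ref{alphaimplymeasure}, contradicting this lower bound. Second, fixing $\delta=1$, if $\int_{E_n^c(1)}|f_n-f|^p\,d\mu \not\to 0$ there would be $\varepsilon>0$ and a subsequence along which this integral stays $\ge \varepsilon$; passing to a further $\ap$-convergent subsequence $(h_n)$ and invoking Theorem \ref{alphaconvergencethm} (which yields $\int_{E_n^c(\delta_0)}|h_n-f|^p\,d\mu \to 0$ for some $\delta_0$) together with Lemma \ref{lemmameasure} (which, since $(h_n)$ converges in measure, upgrades this to every $\delta$, in particular $\delta=1$) again produces a contradiction. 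Hence both conditions of Theorem \ref{alphaconvergencethm} hold and $f_n \apto f$.

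The main obstacle will be (iii): one cannot argue directly from the defining sequences of sets, because a sequence failing to $\ap$-converge may still possess $\ap$-convergent subsequences, so the quantitative ``for every subsequence'' structure must be extracted numerically. The key maneuver is to reduce $\ap$-convergence to the two scalar conditions of Theorem \ref{alphaconvergencethm}, each a statement about real sequences for which subsequence extraction is transparent, and to use Lemma \ref{lemmameasure} to render the threshold $\delta$ in the integral condition irrelevant, so that the extractions in the two steps remain compatible.
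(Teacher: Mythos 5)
Your proposal is correct and follows essentially the same route as the paper: properties (i), (ii), (iv) by direct manipulation of the witnessing sets, and property (iii) by reducing $\ap$-convergence to convergence in measure plus the scalar integral condition of Theorem \ref{alphaconvergencethm}, with Lemma \ref{lemmameasure} making the threshold $\delta$ irrelevant and Proposition \ref{alphaimplymeasure} handling the measure-convergence half. The only cosmetic differences are that you argue (iii) via its contrapositive (the Urysohn subsequence principle) where the paper argues the implication directly, and that you are slightly more explicit than the paper in choosing $k_n$ in (iv) so that $\mu(D_{n,k_n}^c)\to 0$ as well.
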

\begin{proof}
The first and second conditions are clear, so it suffices to prove the third and fourth. \par 
In order to prove the third condition, assume that $(f_n)$ is a sequence of measurable functions that does not $\ap$-converge to $f$. If $(f_n)$ does not converge in measure to $f$ then
\[\exists\delta > 0  \quad \exists \varepsilon > 0 \quad \forall n\in \mathbb{N} \quad \exists k_n \geq n \quad \mu\big(E_{k_n}(\delta) \big) \geq \varepsilon  ,\]
where \[E_{k_n}(\delta) = \{x \in X \ | \ |f_{k_n}(x) - f(x)| \geq \delta \}  .\] It is clear that for any subsequence $(h_n) \subseteq (f_{k_n}),$ $(h_n)$ does not converge to $f$ in measure, and hence $(h_n)$ does not $\ap$-converge to $f$. On the other hand, if $(f_n)$ converges in measure to $f$, since it does not $\ap$-converge to $f$, by Theorem \ref{alphaconvergencethm} and Lemma \ref{lemmameasure} it follows that
\[\exists\delta > 0  \quad \exists \varepsilon > 0 \quad \forall n\in \mathbb{N} \quad \exists k_n \geq n \quad \int_{E_{k_n}^c(\delta)} |f_{k_n} - f|^p \, d\mu \geq \varepsilon  . \]
Consequently, if $(h_n) \subseteq (f_{k_n}),$ then
\[\exists\delta > 0 \quad \int_{H_{n}^c(\delta)} |h_n - f|^p \, d\mu \not\to 0  ,\]
where \[H_n(\delta) = \{x \in X \ | \ |h_n(x) - f(x)| \geq \delta \} , \]
hence $(h_n)$ does not $\ap$-converge to $f$. This proves that $(M(X), \ap)$ satisfies the third condition of a sequential convergence class.
\par
Regarding the fourth condition, let $(B_n)$ be the sequence of measurable sets associated with the $\ap$-convergence of $(f_n)$ and let, for each $n \in \mathbb{N},$ $(D_{n,k})$ be the sequence of measurable sets associated with the $\ap$-convergence of $(f_{n,k}).$ Then, for each $n \in \mathbb{N}$ there exists $k_n \geq n$ such that \[\int_{D_{n,k_n}} |f_{n,k_n}-f_n|^p \, d\mu < \frac{1}{2^n}  . \] Set $C_n = B_n \cap D_{n,k_n}$ and let $\varepsilon > 0.$ There exists $N \in \mathbb{N}$ such that whenever $n \geq N$ one has 
\[\int_{B_n} |f_n-f|^p \, d\mu < \frac{\varepsilon}{2^p}  ,\]

\[\int_{D_{n,k_n}} |f_{n,k_n}-f_n|^p \, d\mu < \frac{\varepsilon}{2^p}  . \]
Thus, for $n \geq N,$
\[
\int_{C_n} |f_{n,k_n}-f|^p \, d\mu \leq 2^{p-1}\int_{D_{n,k_n}} |f_{n,k_n}-f_n|^p \, d\mu + 2^{p-1}\int_{B_n} |f_n-f|^p \, d\mu < \varepsilon  ,
\]
which concludes the proof.

\end{proof}

\section{Preservation of convergence under composition} \label{preservationsection}
\begin{definition}
Let $\mathfrak{m}$ be a notion of convergence for sequences of measurable functions. A function $\varphi : \mathbb{R} \to \mathbb{R}$ is said to preserve $\mathfrak{m}$-convergence if given a measure space $(X,\mathbb{X}, \mu)$ and a sequence $(f_n)$ of $\mathbb{X}$-measurable functions that $\mathfrak{m}$-converges to $f,$ then the sequence $(\varphi(f_n))$ $\mathfrak{m}$-converges to $\varphi(f)$.
\end{definition}

In \cite{bartlepreserv} it is proved that:

\begin{enumerate}[(i)]
\item $\varphi$ preserves almost everywhere convergence if and only if $\varphi$ is continuous, 
\item $\varphi$ preserves uniform convergence, almost uniform convergence or convergence in measure if and only if $\varphi$ is uniformly continuous.
\end{enumerate}
Here, one obtains a similar result as (ii) but for the $L_p$-convergence, convergence almost in $L_p$ and $\ap$-convergence. Precisely:
\begin{theorem} \label{PreservationThm}
A function $\varphi$ preserves $L_p$-convergence, convergence almost in $L_p$ or $\ap$-convergence if and only if $\varphi$ is Lipschitz continuous. 
\end{theorem}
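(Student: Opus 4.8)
The plan is to prove the two implications separately and, in each direction, to dispatch all three modes with a single argument rather than three.

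For sufficiency, suppose $\varphi$ is Lipschitz with constant $L$, so that $|\varphi(u)-\varphi(v)|^p \le L^p |u-v|^p$ pointwise. Then for every measurable set $E$ one has $\int_E |\varphi(f_n)-\varphi(f)|^p \, d\mu \le L^p \int_E |f_n - f|^p \, d\mu$. Taking $E = X$ yields preservation of $L_p$-convergence; taking $E = E_\delta^c$ (the sets witnessing convergence almost in $L_p$) yields preservation of convergence almost in $L_p$; and taking $E = B_n$ (the sets from Definition \ref{alphaconv}) yields preservation of $\ap$-convergence. No case distinction is needed.

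For necessity I would argue by contraposition and, crucially, produce one counterexample defeating all three modes simultaneously, exploiting the chain $L_p \Rightarrow$ almost in $L_p$ $\Rightarrow \ap$ recalled in Section \ref{modessection}. It thus suffices to exhibit a sequence $(f_n)$ with $f_n \to f$ in $L_p$ (hence in the two weaker modes) such that $(\varphi(f_n))$ does not $\ap$-converge to $\varphi(f)$ (hence fails in the two stronger modes as well). The first preparatory step is a chaining lemma: if $|\varphi(s)-\varphi(t)| \le L|s-t|$ held for all pairs with $|s-t| < \eta_0$, then subdividing an arbitrary segment into pieces shorter than $\eta_0$ would force global $L$-Lipschitz continuity. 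Consequently, $\varphi$ being non-Lipschitz supplies, for every $n$, points $s_n \ne t_n$ with $|s_n - t_n| < 1/n$ and $|\varphi(s_n)-\varphi(t_n)| > n|s_n - t_n|$. Writing $a_n = |s_n - t_n|$ and $b_n = |\varphi(s_n)-\varphi(t_n)|$, I obtain $a_n \to 0$ and $b_n > n a_n$, which is exactly the leverage required. The construction then lives on $X = [0,\infty)$ with Lebesgue measure: I would fix pairwise disjoint intervals $S_n$ with $\mu(S_n) = m_n := \max\{1, b_n^{-p}\}$, laid end to end, and set $f = \sum_n t_n \chi_{S_n}$ and $f_n = f + (s_n - t_n)\chi_{S_n}$. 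Then $\int_X |f_n - f|^p \, d\mu = a_n^p m_n = \max\{a_n^p, (a_n/b_n)^p\} \to 0$, giving $f_n \to f$ in $L_p$, while $\int_{C}|\varphi(f_n)-\varphi(f)|^p\,d\mu = b_n^p \mu(C \cap S_n)$ for any measurable $C$. To see that $\ap$-convergence of $(\varphi(f_n))$ fails, let $(C_n)$ be arbitrary with $\mu(C_n^c)\to 0$; since $m_n \ge 1$ one has $\mu(C_n\cap S_n)\ge m_n-\mu(C_n^c)$, whence $b_n^p\mu(C_n\cap S_n)\ge \max\{b_n^p,1\}-b_n^p\mu(C_n^c)$, and a two-line check in the cases $b_n\ge1$ and $b_n<1$ shows this stays $\ge 1/2$ once $\mu(C_n^c)<1/2$, so the integrals cannot tend to $0$.

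The hard part will be the robustness demanded by the $\ap$-failure: unlike the $L_p$ case, where it is enough that $\int_X |\varphi(f_n)-\varphi(f)|^p$ stay bounded below, here the competitor may discard any set of vanishing measure, so a single thin spike (measure $\to 0$, as happens when $b_n \to \infty$) could be deleted for free. The device neutralising every regime of $b_n$ at once is the truncated weight $m_n = \max\{1, b_n^{-p}\}$, which keeps the spike fat enough ($m_n \ge 1$) to survive deletion of small sets yet small enough in the product $a_n^p m_n$ to preserve $L_p$-convergence; guaranteeing $a_n \to 0$ via the chaining lemma is precisely what makes the constant weight $m_n = 1$ admissible in the delicate regime $b_n \to \infty$. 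The remaining verifications, namely measurability of $f$ and $f_n$ and the two-case estimate, are routine.
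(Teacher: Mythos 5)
Your proposal is correct and follows essentially the same route as the paper: the identical one-line Lipschitz estimate for sufficiency, the same local-to-global chaining lemma, and for necessity a single counterexample on $[0,\infty)$ built from disjoint intervals of measure at least $1$ on which $f_n$ jumps from $t_n$ to $s_n$, defeating $\alpha_p$-convergence (and hence all three modes) robustly against deletion of small sets. The only difference is your normalization $m_n=\max\{1,b_n^{-p}\}$ versus the paper's interval length $\tfrac{1}{n|a_n-b_n|^p}$, which is an immaterial variation of the same idea.
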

Before proving the theorem, one needs a local characterization of Lipschitz continuity.
\begin{lemma}
A function $\varphi : \mathbb{R} \to \mathbb{R}$ is Lipschitz continuous if and only if 
\begin{equation} \label{phiconditon}
\exists \delta>0 \quad \exists K > 0 \quad \forall a,b \in \mathbb{R} \quad |a-b|<\delta \Rightarrow |\varphi(a) - \varphi(b)| \leq K|a-b|  .
\end{equation}
\end{lemma}
\begin{proof} It is clear that a Lipschitz continuous function satisfies (\ref{phiconditon}). Assume that $\varphi : \mathbb{R} \to \mathbb{R}$ satisfies (\ref{phiconditon}). Let $a,b \in \mathbb{R}$ and assume without loss of generality that $a < b.$ Let $\tilde{\delta} = \delta / 2$ and set $d = |a - b|.$ There exists $N \in \mathbb{N}_0$ such that $d = N \tilde{\delta} + \varepsilon$ for some $\varepsilon \in [0, \tilde{\delta}[$ (take $N$ as the integer part of $d/\tilde{\delta}$). Now, let 
\[x_n = a + n \tilde{\delta}  , \quad n = 0, 1, \ldots, N  , \]
\[x_{N+1} = a + N\tilde{\delta} + \varepsilon = b  . \]
Then, the triangle inequality together with (\ref{phiconditon}) yields
\begin{equation*}
\begin{split}
|\varphi(a) - \varphi(b)| & \leq \sum_{n=0}^{N-1} |\varphi(x_n)-\varphi(x_{n+1})| + |\varphi(x_N) - \varphi(b)| \\
                         & \leq \sum_{n=0}^{N-1} K|x_n - x_{n+1}| + K|x_N - b| \\ 
                         & = \sum_{n=0}^{N-1} K \tilde{\delta} + K \varepsilon \\ 
                         & = K(N\tilde{\delta} + \varepsilon) \\
                         & = K|a - b|  ,
\end{split}
\end{equation*}
which completes the proof.
\end{proof}

\begin{proof}[Proof of Theorem \ref{PreservationThm}]
$ $\newline
$(\Leftarrow)$ Suppose that $\varphi$ is Lipschitz continuous and that $f_n \to f$ in $L_p.$ Then, there exists $K > 0$ such that \[\forall a,b \in \mathbb{R} \quad |\varphi(a) - \varphi(b)| \leq K|a-b|  , \] 
and \[\int_{X} |f_n - f|^p \, d\mu \to 0 \ \text{as} \ n \to \infty  . \]
Thus, \[\int_{X} |\varphi(f)_n - \varphi(f)|^p \, d\mu \leq K^p\int_{X} |f_n - f|^p \, d\mu \to 0 \ \text{as} \ n \to \infty  , \]
that is, \[\varphi(f_n) \to \varphi(f) \ \text{in} \ L_p  .\] 
In a similar fashion, if $f_n \to f$ almost in $L_p$ or  $f_n \apto f$ then $\varphi(f_n) \to \varphi(f)$ almost in $L_p$ or $\varphi(f_n) \apto \varphi(f),$ respectively.\\
$(\Rightarrow)$ 
Suppose that $\varphi$ does not satisfy (\ref{phiconditon}) and choose sequences $(a_n),(b_n) \subseteq \mathbb{R}$ such that, for each $n \in \mathbb{N},$ \[0<|a_n - b_n| < \frac{1}{n^{1/p}}  , \quad |\varphi(a_n)- \varphi(b_n)| > n^{1/p}|a_n - b_n|  . \]
Let $(X,\mathbb{X},\mu)=([0,\infty), \mathcal{B},\mathcal{L}),$ define $f : X \to \mathbb{R}$ by
\begin{equation*} \label{preserv_f}
f(x) = \begin{cases}
            b_1  , \quad \text{if} \ 0 \leq x < \frac{1}{|a_1-b_1|^p}  , \\
             b_n  , \quad \text{if} \ \sum_{k=1}^{n-1} \frac{1}{|a_k-b_k|^p} + \frac{n-1}{n|a_n-b_n|^p} \leq x < \sum_{k=1}^{n} \frac{1}{|a_k-b_k|^p}  , \quad n \in \mathbb{N}\setminus{\{1\}} , \\ 
              0  , \quad \text{otherwise}  .
\end{cases} 
\end{equation*}
and, for each $n \in \mathbb{N} \setminus \{1\},$ let $f_n : X \to \mathbb{R}$ be given by
\begin{equation*} \label{preserv_fn}
f_n(x) = \begin{cases}
             a_n  , \quad \sum_{k=1}^{n-1} \frac{1}{|a_k-b_k|^p} + \frac{n-1}{n|a_n-b_n|^p} \leq x < \sum_{k=1}^{n} \frac{1}{|a_k-b_k|^p}  , \\ 
              f(x)  , \quad \text{otherwise}  .
\end{cases}
\end{equation*}
It holds that
\[\int_X |f_n - f|^p \, d\mu = \frac{1}{n} \to 0 \ \text{as} \ n \to \infty  ,\]
that is, $(f_n)$ converges to $f$ in $L_p$. Hence $f_n \to f$ almost in $L_p$ and $f_n \apto f.$ It remains to be shown that $(\varphi(f_n))$ does not $\ap$-converge to $\varphi(f)$ (and hence it does not converge neither almost in $L_p$ nor in $L_p$). To that end, let $(B_n)$ be any sequence of measurable sets such that $\mu (B_n^c) \to 0$ as $n \to \infty,$ set \[I_n = \Bigg[\sum_{k=1}^{n-1} \frac{1}{|a_k-b_k|^p} + \frac{n-1}{n|a_n-b_n|^p},   \sum_{k=1}^{n} \frac{1}{|a_k-b_k|^p}\Bigg[  ,\] and notice that 
\begin{equation*}
\begin{split}
\int_{B_n} |\varphi(f_n) - \varphi(f)|^p \, d\mu & = \int_{B_n \cap I_n} |\varphi(a_n) - \varphi(b_n)|^p \, d\mu \\
& > n|a_n - b_n|^p \mu(B_n \cap I_n) \\
& = n|a_n - b_n|^p \big( \mu(I_n) - \mu(B_n^c \cap I_n) \big) \\
& = 1 - n|a_n - b_n|^p \mu(B_n^c \cap I_n)  .
\end{split}
\end{equation*}
Now let $N \in \mathbb{N}$ be such that $\mu(B_n^c) < 1/2$ whenever $n \geq N.$ Thus, for $n \geq N,$
\begin{equation*}
\begin{split}
\int_{B_n} |\varphi(f_n) - \varphi(f)|^p \, d\mu & > 1 - n|a_n - b_n|^p \mu(B_n^c \cap I_n) \\
& > 1 - \mu(B_n^c) \\
& > 1/2 ,
\end{split}
\end{equation*}
therefore $(\varphi(f_n))$ does not $\ap$-converge to $\varphi(f)$ and the proof is complete.
\end{proof}


\section*{Acknowledgements}
The authors would like to thank Professor Athanasios Tzavaras for raising the question that originated the development of this work. 


\end{document}